\newtheorem{thm}{Theorem}[section]
\newtheorem{prop}[thm]{Proposition}
\newtheorem{lemma}[thm]{Lemma}
\newtheorem{cor}[thm]{Corollary}
\theoremstyle{definition}
\newtheorem{rem}[thm]{Remark}
\newtheorem{dfn}[thm]{Definition}
\title{A Tverberg type theorem for \\ collectively unavoidable complexes}
\author[D. Joji\'{c}]{Du\v{s}ko Joji\'{c}}
\author[G. Panina]{Gaiane Panina}
\author[R. \v{Z}ivaljevi\'{c}]{Rade \v{Z}ivaljevi\'{c}}
\address[D. Joji\'{c}]{ Faculty of Science, University of Banja Luka}
\address[G. Panina]{Mathematics \& Mechanics Department, St. Petersburg State University}
\address[R. \v{Z}ivaljevi\'{c}]{Mathematical Institute SASA, Belgrade}
\address{
  }
 \keywords{discrete Morse theory, perfect Morse
function, Bier spheres, Alexander duality, collectively unavoidable complexes, simplicial complex}
\begin{document}

 \maketitle \setcounter{section}{0}

\begin{abstract}
We prove (Theorem~\ref{thm:main}) that the symmetrized deleted join $SymmDelJoin(\mathcal{K})$ of
a ``balanced family'' $\mathcal{K} = \langle K_i\rangle_{i=1}^r$ of collectively
$r$-unavoidable subcomplexes of $2^{[m]}$ is $(m-r-1)$-connected. As a consequence we
obtain a Tverberg-Van Kampen-Flores type result (Theorem~\ref{thm:main-2}) which is more conceptual and
more general then previously known results.  Already the case $r=2$ of Theorem~\ref{thm:main-2}
seems to be new as an extension of the classical Van Kampen-Flores theorem. The main tool used in the paper is
R. Forman's discrete Morse theory.
\end{abstract}

\section{Introduction}

Tverberg-Van Kampen-Flores type results have been for many years one of the
central research themes in topological combinatorics. The last decade has been
particularly fruitful with some long standing conjectures resolved, as summarized  by several review papers \cite{bbz, lgmm, sko, Z17}, covering
different aspects of the theory.


Certainly the most striking among the new results is the resolution (in the negative!) of the general ``Topological Tverberg Problem''
\cite{MaWa, frick, bfz-1}. On the positive side is the proof \cite[Theorem 1.2]{jvz-2} of the ``Balanced Van Kampen Flores theorem'' indicating in which direction one
   can expect new positive results.

   \medskip
In this paper we prove a result (Theorem~\ref{thm:main-2}) which we see as a candidate for the currently most general and far reaching
result of Van Kampen-Flores type. Indeed, this result contains the ``Balanced Van Kampen-Flores theorem'' as a special
case (Corollary~\ref{cor:main2glavna}), as well as other results of this type. Note that already the case $r=2$ of the theorem (see Section~\ref{sec:example}),
which extends the classical Van Kampen-Flores theorem, doesn't seem to have been recorded before.

\medskip
Surprisingly enough Theorem~\ref{thm:main-2} is not only more general but it also provides a more conceptual and possibly more elegant and transparent approach.
The new approach relies on the concepts of ``collectively unavoidable'' (Definition~\ref{dfn:pigeonhole}) and
``balanced'' (Definition~\ref{dfn:balanced}) $r$-tuples
$\mathcal{K} = \langle K_i\rangle_{i=1}^r = \langle K_1,\dots, K_r\rangle$ of simplicial complexes.
Recall that collectively unavoidable complexes were originally introduced and studied in \cite{jnpz} as a common
generalization of pairs $\langle K, K^\circ\rangle$ of Alexander dual complexes (Alexander
$2$-tuples) and $r$-unavoidable complexes of \cite{bfz} and \cite{bestiary}.

\medskip
The key topological (connectivity) property of `balanced' collectively $r$-unavoidable $r$-tuples $\mathcal{K}$ is proved in Theorem~\ref{thm:main}.
The proof is based on discrete Morse theory and the construction of the discrete Morse function is particularly well adapted for
applications to deleted joins and symmetrized deleted joins of complexes.

\medskip
Here is a brief outline of the paper.
In Section~\ref{sec:connectivity} we prove the first main result of the paper, Theorem~\ref{thm:main}, which estimates the connectivity of the symmetrized
deleted join of an r-tuple of complexes, under assumption that the $r$-tuple is both balanced and unavoidable. The main tool in the proof is the R. Forman's
discrete Morse theory (DMT).
The new Van Kampen-Flores type result (Theorem~\ref{thm:main-2}) is obtained in Section~\ref{sec:Tverberg} as a corollary of Theorem~\ref{thm:main}.
The proof uses the usual \emph{Configuration Space/Test Map Scheme}, see \cite{jvz-2},  and relies on Volovikov's version of Borsuk-Ulam theorem \cite{Z17}.
In Section~\ref{sec:collective} we discuss criteria for an r-tuple of simplicial complexes to be  both balanced and collectively unavoidable.
Finally,  for the reader's convenience, we outline in the Appendix (Section~\ref{sec:collective}) basic principles of discrete Morse theory  \cite{Forman1, Forman2}.

\medskip
\subsection*{Acknowledgements}It is our pleasure to acknowledge the support and hospitality
of the {\em  Centre International de Rencontres Math\' ematiques} (CIRM, Marseille), where
in the fall of 2018  this paper was initiated as a `research in
pairs' project.  R. \v{Z}ivaljevi\'{c}
acknowledges the support of the Ministry of Education, Science and
Technological Development of Republic of Serbia, Grant 174034.

\section{Connectivity of the symmetrized deleted join}\label{sec:connectivity}

\subsection{Preliminary definitions}

\begin{dfn}\label{dfn:balanced}
We say that a simplicial complex $K\subseteq 2^{[m]}$ is {\em $(m,k)$-balanced} if it is
positioned between two consecutive skeleta of a simplex on $m$ vertices,
\begin{equation}\label{eqn:binoms}
   {{[m]}\choose{\leqslant k}} \subseteq K \subseteq {{[m]}\choose{\leqslant k+1}}\, .
\end{equation}
\end{dfn}

\begin{dfn}
The {\em deleted join} \cite[Section~6]{Mat} of a family
$\mathcal{K} = \langle K_i\rangle_{i=1}^r = \langle K_1,\dots,
K_r\rangle$ of subcomplexes of $2^{[m]}$ is the complex
$\mathcal{K}^\ast_\Delta = K_1\ast_\Delta\dots \ast_\Delta  K_r
\subseteq (2^{[m]})^{\ast r}$ where $A = A_1\sqcup\dots\sqcup A_r\in
\mathcal{K}^\ast_\Delta$ if and only if $A_j$ are pairwise
disjoint and $A_i\in K_i$ for each $i=1,\dots, r$.

The {\em symmetrized deleted join} of  $\mathcal{K}$ is defined as
\[
SymmDelJoin(\mathcal{K}) := \bigcup_{\pi\in S_r} K_{\pi(1)}\ast_\Delta\dots \ast_\Delta  K_{\pi(r)}\subseteq (2^{[m]})^{\ast r} \, .
\]
where the union is over the set of all permutations of $r$ elements. (Here $S_r$ stands for the symmetric group.)
\end{dfn}
An element  $A_1\sqcup\dots\sqcup A_r \in SymmDelJoin(\mathcal{K})$ is from here on recorded as
$(A_1,A_2,...,A_r;B)$ where $B$ is the complement of $\cup_{i=1}^r~A_i$, so in particular $A_1\sqcup\dots\sqcup A_r\sqcup B = [m]$ is a partition of $[m]$.
Note that the dimension of the simplex can be read of from $\vert B\vert$ as ${\rm dim}(A_1,...,A_r;B) = m - \vert B\vert -1$.

\medskip
Collectively unavoidable $r$-tuples of complexes are  introduced in \cite{jnpz}.
They were originally studied as a common
generalization of pairs of Alexander dual complexes, Tverberg unavoidable complexes of \cite{bfz} and $r$-unavoidable
complexes from \cite{bestiary}.

\begin{dfn}\label{dfn:pigeonhole}
An ordered $r$-tuple $\mathcal{K} = \langle K_1,...,K_r\rangle$ of
subcomplexes of $2^{[n]}$ is {\em collectively
$r$-unavo\-id\-able} if for each ordered collection
$(A_{1},...,A_{r})$ of disjoint sets in $[n]$ there exists $i$
such that $A_i\in K_i$.
\end{dfn}

\subsection{The main theorem}

\begin{thm}\label{thm:main} Suppose that $\mathcal{K} = \langle K_i\rangle_{i=1}^r = \langle K_1,\dots,
K_r\rangle$ is a {\em collectively unavoidable} family of subcomplexes of $2^{[m]}$. Moreover, we assume that there exists $k$ such that $K_i$ is $(m,k)$-balanced
for each $i=1,\dots, r$. Then the associated symmetrized  deleted join
$$SymmDelJoin(\mathcal{K}) = SymmDelJoin(K_1,\dots, K_r)$$
is $(m-r-1)$-connected.
\end{thm}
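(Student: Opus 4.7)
The plan is to apply R. Forman's discrete Morse theory and construct an acyclic matching on the face poset of $SymmDelJoin(\mathcal{K})$ whose critical simplices all satisfy $|B|\le r-1$. Since a cell $\sigma=(A_1,\dots,A_r;B)$ has dimension $m-|B|-1$, this places all critical cells in dimensions $\ge m-r$. By Forman's main theorem, $SymmDelJoin(\mathcal{K})$ is then homotopy equivalent to a CW complex with cells only in dimensions $\ge m-r$, and is in particular $(m-r-1)$-connected.

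The matching would be defined by a pivot rule. For each cell $\sigma=(A_1,\dots,A_r;B)$, I would scan the vertex set $[m]$ in increasing order for the smallest element $v$ that can be \emph{toggled across the partition}, and pair $\sigma$ with the toggled cell $\sigma'$. A toggle either moves $v\in B$ into a canonically chosen coordinate $A_i$ (admissible iff $\sigma'$ remains in $SymmDelJoin(\mathcal{K})$), or moves $v\in A_i$ out to $B$ (automatically admissible since the complex is closed under faces). The tie-breaking for the coordinate index $i$ must be calibrated so that $\sigma$ and $\sigma'$ designate the same pivot pair $(v,i)$, yielding a fixed-point-free involution on the matched portion of the face poset; acyclicity then follows from a lexicographic argument showing that $(v,i)$ strictly decreases along any putative directed cycle in the modified Hasse diagram.

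Identifying the critical cells is the heart of the argument, and is where both hypotheses enter. For a cell $\sigma$ with $|B|\ge r$ one must exhibit an admissible toggle. The balanced condition $\binom{[m]}{\le k}\subseteq K_i\subseteq \binom{[m]}{\le k+1}$ makes any addition involving an $A_i$ of size $\le k-1$ trivially admissible, so it suffices to treat the regime in which every $A_i$ has size $k$ or $k+1$. In that regime, since $|B|\ge r$, one can select disjoint subsets $\widehat{A}_1,\dots,\widehat{A}_r\subseteq B$ of sizes $\le k+1$ and invoke collective $r$-unavoidability to produce some $\widehat{A}_i\in K_i$; comparing with a permutation certifying $\sigma\in SymmDelJoin(\mathcal{K})$ then yields the required elementary toggle of the smallest free vertex, contradicting criticality.

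The main obstacle is twofold. First, the pivot rule must be compatible with the $S_r$-symmetrization: a simplex $\sigma$ is typically certified by several permutations $\pi$, and the rule must not depend on which one is used, so that the pairing is a well-defined involution across the union $\bigcup_\pi K_{\pi(1)}\ast_\Delta\cdots\ast_\Delta K_{\pi(r)}$. Second, the pigeonhole extraction of a toggle in the ``saturated'' regime $|A_i|\in\{k,k+1\}$ must be engineered to produce a genuine elementary move rather than a compound swap, so as to fit the formalism of a matching on the Hasse diagram. Once these two technical points are handled, both the involution property and acyclicity follow by standard discrete Morse arguments.
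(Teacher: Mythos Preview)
Your overall architecture matches the paper's: discrete Morse theory, an acyclic matching, and the claim that every critical cell has $|B|\le r-1$. But the two places you flag as ``the heart of the argument'' are exactly where your proposal has gaps.

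\medskip
\textbf{The pivot rule is not actually given.} You promise a rule based on ``the smallest togglable $v$'' together with a ``canonically chosen coordinate $i$'', and you correctly note that the choice must be independent of the certifying permutation and must make $\sigma\mapsto\sigma'$ an involution. But you never say what the rule is. This is not a minor omission: a naive ``smallest $v$'' rule does not obviously produce an involution, because when $v\in B$ the target coordinate must be recoverable from $\sigma'$ alone, and when several $A_i$ could accept $v$ you need a tie-break that commutes with the reverse toggle. The paper's solution is not of this ``global smallest pivot'' type at all; it is an $r$-step layered construction where at Step~$j$ one defines $a_j:=\min\bigl((B\cup A_j)\setminus[1,a_{j-1}]\bigr)$ and matches along coordinate $j$, only among cells unmatched at earlier steps. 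The acyclicity argument then uses the full ``passport'' $(a_1,\dots,a_r)$, which is constant (not strictly decreasing) along a hypothetical closed path, followed by a parity/index argument on which $a_i$ migrates at each step. Your one-line ``$(v,i)$ strictly decreases lexicographically'' does not correspond to what actually happens.

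\medskip
\textbf{The critical-cell analysis misuses collective unavoidability.} You propose, when $|B|\ge r$, to choose disjoint $\widehat A_1,\dots,\widehat A_r\subseteq B$ and invoke unavoidability to get some $\widehat A_i\in K_i$. But any small subset of $B$ is already in every $K_j$ by the balanced hypothesis, so this is vacuous and in any case produces no elementary toggle of a single vertex into some $A_j$. The argument runs in the opposite direction: fix a certifying permutation $\phi$ for $\sigma$, and use criticality together with balancedness to enlarge each $A_i$ by a single element of $B$ to a set $A_i'$ with $A_i'\notin K_{\phi(i)}$ (when the Step~$i$ toggle of $a_i$ is blocked, $A_i\cup\{a_i\}\notin K_{\phi(i)}$; when $|A_i|=k+1$ already, adding any element of $B$ leaves $K_{\phi(i)}$). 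The resulting disjoint family $(A_1',\dots,A_r')$ then contradicts collective unavoidability. That is where both hypotheses are genuinely consumed, and your sketch does not reach it.
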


\proof{(outline)}  In Section~\ref{sec:construction} we construct a discrete Morse function on
the symmetrized deleted join  $SymmDelJoin(\mathcal{K})$. In other words we describe an {\em acyclic matching} of simplices in
$SymmDelJoin(\mathcal{K})$  (see the Appendix for a brief description of this technique). The proof of the acyclicity is given in
Section~\ref{sec:acyclicity}. Following one of the central principles of Discrete Morse Theory, the complex $SymmDelJoin(\mathcal{K})$ is homotopy equivalent
to a complex built from critical simplices. So the proof is concluded (Section~\ref{sec:critical}) by showing that the dimension of all critical simplices is at least
(m-r) (with the exception of the unique simplex of dimension $0$).

\subsection{Construction of a discrete Morse function}\label{sec:construction}

\hfill\break
Assume that $A_1\sqcup\dots\sqcup A_r \sqcup B = [m]$ is an ordered partition, interpreted as a simplex
$(A_1,A_2,...,A_r;B)\in (2^{[m]})^{\ast r}$.
A simplex labeled by $(A_1,...,A_r;B)$ is called \textit{large} if $|B|\leq r-1$. The dimension of a large simplex
is at least $m-r$.

\medskip

Our aim is to construct a Discrete Morse Function (DMF) such that all simplices that are not large are matched
 (with one $0$-dimensional exception).
This is precisely the condition needed for the $(m-r-1)$-connectivity, see the Appendix.

\bigskip

\textbf{Step 1.}
Set $a_1:= min(B,A_1)$ and match the simplices

$(A_1,...,A_r;Ba_1)$ and $(A_1a_1,...,A_r;B)$  whenever  both of them are elements of the complex $SymmDelJoin(\mathcal{K})$.

\bigskip

Let us analyze non-matched simplices.

There is exactly one $0$-dimensional unmatched simplex, namely the simplex $(\{1\},\emptyset,\dots,\emptyset; [m]\setminus\{1\})$.
The remaining unmatched simplices  are  some of the simplices of the form $(A_1,...,A_r;Ba_1)$.

For bookkeeping purposes these simplices are recorded as \textit{Step 1 -- Type 1}  unmatched simplices.

\bigskip

\textbf{Step 2.}
Set $a_2:= min((B\cup A_2)\setminus [1,a_1])$ and match the simplices

$(A_1,...,A_r;Ba_2)$ and $(A_1,A_2a_2,...,A_r;B)$  whenever
\begin{enumerate}
  \item  both of them belong to  $SymmDelJoin$,
  \item both of them were not matched before (that is, on step 1).
\end{enumerate}

Unmatched simplices (we ignore now the  zero-dimensional one) are now of three types:
\begin{enumerate}
  \item $(A_1,...,A_r;Ba_2)$.

  We say that this is \textit{Step 2 -- Type 1 simplex}.

  \item $(A_1,A_2a_2,...,A_r;B)$,

We say that this is \textit{Step 2 -- Type 2 simplex}.

  \item  those with  $(B\cup A_2)\setminus [1,a_1]=\emptyset$. All the simplices of this type are large.
\end{enumerate}

\bigskip

Other steps go analogously so for example the Step k looks as follows:

\medskip
\textbf{Step k.}
Set $a_k:= min((B\cup A_k)\setminus [1,a_{k-1}])$ and match the simplices

$(A_1,...,A_r;Ba_k)$ and $(A_1,\dots, A_ka_k,\dots, A_r;B)$  whenever
\begin{enumerate}
  \item  both of them belong to  $SymmDelJoin$,
  \item both of them were not matched before (that is, on step $\leq k-1$).
\end{enumerate}

Unmatched simplices  are  of the three types:
\begin{enumerate}
  \item $(A_1,...,A_r;Ba_k)$.

  We say that this is \textit{Step k -- Type 1 simplex}.

  \item $(A_1,\dots, A_ka_k,\dots, A_r;B)$,

We say that this is \textit{Step k -- Type 2 simplex}.

  \item  those with  $(B\cup A_k)\setminus [1,a_{k-1}]=\emptyset$. All the simplices of this type are large.
\end{enumerate}

\medskip
We {proceed analogously} for $k=1,\dots, r$.

\medskip

This completes the construction of a discrete vector field on the symmetrized deleted join $SymmDelJoin(\mathcal{K})$.
It remains to be shown that this discrete vector field is acyclic.

\bigskip
\subsection{The acyclicity of the discrete vector field}\label{sec:acyclicity}

\begin{dfn}
Given a simplex $\sigma = (A_1,\dots, A_r; B)\in SymDelJoin$, its {\em passport} $p(\sigma) = (a_1,\dots, a_r)$ is defined as
$a_i := \min ((A_i\cup B)\setminus [1,a_{i-1}])$, provided the indicated set is non-empty. Otherwise we set $a_i:= \infty$.
\end{dfn}
We tacitly assume that the passports are linearly ordered by the lexicographic ordering.

\medskip\noindent
{\bf Claim~1.} Along a gradient path the passport does not increase.  {\qed}

\medskip
From here we immediately conclude that if there is a closed path, the passport is constant.

\medskip
For the next claim recall that a {\em migrating element} (see the Appendix), corresponding to the ``splitting step'' $\beta^{p+1}_i \searrow \alpha^p_{i+1}$ in a gradient path,
is the vertex $v\in \beta^{p+1}_i \setminus  \alpha^p_{i+1}$. Similarly in the ``matching step''  $\alpha^p_i \nearrow \beta^{p+1}_i$ it is the element $v\in \beta^{p+1}_i \setminus  \alpha^p_{i}$.
For illustration the {matching} step $(A_1,...,A_r;Ba_k) \nearrow (A_1,\dots, A_ka_k,\dots, A_r;B)$ can be described as a migration of $a_k$ from
$B' = Ba_k$ to $A_k' = A_ka_k$. Similarly, in the splitting step $(A_1,\dots, A_k\nu,\dots, A_r;B) \searrow (A_1,\dots, A_k,\dots, A_r;B\nu)$, the
element $\nu$ migrates from $A_k' = A_k\nu$ to $B\nu$.

\medskip\noindent
{\bf Claim~2.} Assuming that there is a closed path, the migrating elements can come only from the set $\{a_i\}_{i=1}^r$.

\medskip
Indeed, if $a_i$  migrates from $A_i$ to $B$ than it can come back only as an element of $A_i$.  {\qed}

\medskip In summary, we conclude that a closed path, if it exists, is uniquely determined by the sequence of indices of migrating elements.

\medskip
For instance, a fragment of a closed path, producing indices $$ i_1=3, i_2=4, i_3 = 2, $$ looks exactly as follows,

\[ \begin{array}{c}
(A_1,A_2, A_3, A_4a_4; Ba_1a_3a_2) \\
\downarrow 3  \\
(A_1,A_2, A_3a_3, A_4a_4; Ba_1a_2) \\
\downarrow 4 \\
(A_1,A_2, A_3a_3, A_4a_4; Ba_1a_2a_4) \\
\downarrow 2 \\
(A_1,A_2a_2, A_3a_3, A_4; Ba_1a_4) \\
\end{array}\]

  Recall that in a gradient path we distinguish the ``matching steps'' (as the steps when some $a_i$ migrates from $B$ to $A_i$) from the
  ``splitting steps'' (when some $a_i$ migrates from $A_i$ to $B$).

  \medskip
Note that each of the migrating elements $a_i$ participates in an equal number of matching and splitting steps.

\medskip
Assuming that the indexing of steps in the closed path is chosen so that the even steps correspond to the ``matching steps'' (and the
odd steps are the ``splitting steps'') then the indices satisfy the following relation:

\begin{equation}\label{eqn:indices}
\begin{array}{l}
(\forall j)\, i_j\neq i_{j+1} \\
(\forall j)\, i_{2j+1} > i_{2j+2}.
\end{array}
\end{equation}

Taking the minimal migrating index $i$ leads to a contradiction with the second inequality in  (\ref{eqn:indices}).

\subsection{Critical simplices are large}\label{sec:critical}

Let $\Phi(A_1,A_2,...,A_r;B)$ be the set of all permutations
$\phi\in S_r$  such that  $A_i \in K_{\phi(i)}$ for each $i=1,\dots, r$.
Clearly,
$(A_1,A_2,...,A_r;B)$ belongs to $SymmDelJoin(\mathcal{K})$  iff   $\Phi(A_1,A_2,...,A_r;B)$ is non-empty.

Let us look at the non-matched simplices after the last step, that is, after the step $r$.

 We need to show that each non-matched simplex  is {\em large} (except for a single $0$-dimensional simplex).

Let us assume that the simplex $(A_1,...,A_r;B)$ is unmatched. Let $I\subset [r]$ be the set of all indices such that
$$
  i\in I  \quad \Leftrightarrow \quad (A_1,\dots, A_r; B) \mbox{ {\rm is \textit{Type 1 on Step $i$}} } \, .
$$
Then $k\notin I$ implies that the simplex $(A_1,\dots, A_r; B)$ is \textit{Type 2 on Step $k$}.

Assume now that the simplex is not large, that is, $|B|>r-1$.
Choose a permutation $\phi \in \Phi(A_1,...,A_r;B)$. If
$k\in I$ then  $a_k\in B$, and $A_ka_k\notin K_{\phi(k)}$. If
$k \notin I$ then $|A_k|=\nu +1$, so $A_k$ plus any other element $b$ is no longer in $K_{\phi(k)}$.

Now we enlarge each $A_i$ by an element $\pi(i)\in B$. More precisely let $A_i' := A_i\cup\{\pi(i)\}$ where
for $i\in I$ we define $\pi(i) = a_i$ and if $i\notin I$ then $\pi(i)= b_i\in B\setminus\{a_i\}$. By construction $A_i'\notin K_{\pi(i)}$
and, since $A'_1\sqcup\dots\sqcup A'_r = [r]$ is a partition, we obtain a contradiction with the collective unavoidability of the family
$\mathcal{K} = \langle K_i \rangle_{i=1}^r$.
\qed

\section{A general Tverberg-Van Kampen-Flores theorem for balanced complexes}
\label{sec:Tverberg}

The following theorem of Tverberg-Van Kampen-Flores type is the main result of \cite{jvz-2}.
It is very likely the most general known result that evolved from the classical Van Kampen-Flores theorem \cite[Section 22.4.3]{Z17}.
For example it extends and contains as a special case the `Generalized Van Kampen-Flores Theorem' of Sarkaria \cite{Sar91}, Volovikov \cite{Vol96-2}, and
Blagojevi\' c, Frick and Ziegler \cite{bfz}.

\begin{thm}\label{thm:glavna-jvz} {\rm (\cite[Theorem 1.2]{jvz-2})}
Let $r\geq 2$ be a prime power, $d \geq 1$, $N \geq (r - 1)(d +
2)$, and $rk+s \geq (r-1)d$ for integers $k \geq 0$ and $0 \leq s
< r$. Then for every continuous map $f : \Delta_N \rightarrow
\mathbb{R}^d$, there are $r$ pairwise disjoint faces
$\sigma_1,\ldots,\sigma_r$ of $\Delta_N$ such that
$f(\sigma_1)\cap \cdots\cap f(\sigma_r) \neq \emptyset$, with
$\textrm{dim }\sigma_i\leq k + 1$ for $1 \leq i \leq s$ and
$\textrm{dim }\sigma_i\leq k $ for $s < i \leq r$.
\end{thm}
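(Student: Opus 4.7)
My plan is to deduce Theorem~\ref{thm:glavna-jvz} via the standard \emph{Configuration Space / Test Map} paradigm, using Theorem~\ref{thm:main} as the connectivity input and Volovikov's equivariant Borsuk--Ulam theorem as the obstruction. Since the conclusion depends only on the restriction of $f$ to a fixed sub-simplex on $(r-1)(d+2)+1$ vertices, I first reduce to the case $N=(r-1)(d+2)$. Set $m=N+1$ and define the family $\mathcal{K}=\langle K_i\rangle_{i=1}^{r}\subseteq 2^{[m]}$ by
\[
K_i = \binom{[m]}{\le k+2}\text{ for }1\le i\le s, \qquad K_i = \binom{[m]}{\le k+1}\text{ for }s<i\le r.
\]
Crucially every $K_i$ satisfies $\binom{[m]}{\le k+1}\subseteq K_i\subseteq \binom{[m]}{\le k+2}$, so $\mathcal{K}$ is $(m,k+1)$-balanced in the sense of Definition~\ref{dfn:balanced}. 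An $r$-tuple of pairwise disjoint faces $\sigma_i$ of $\Delta_N$ meeting the prescribed dimension bounds and with a common $f$-image is precisely a zero of the usual Tverberg test map $F\colon X\to W^{\oplus(d+1)}$, where $X:=SymmDelJoin(\mathcal{K})$ and $W$ is the standard $(r-1)$-dimensional real representation of $S_r$.

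Next I verify collective $r$-unavoidability. If $(A_1,\dots,A_r)$ is a pairwise disjoint family with $A_i\notin K_i$ for every $i$, then $|A_i|\ge k+3$ for $i\le s$ and $|A_i|\ge k+2$ for $i>s$, so $\sum_i|A_i|\ge rk+2r+s$. Combined with the disjointness bound $\sum_i|A_i|\le m$ and the hypothesis $rk+s\ge(r-1)d$ this yields
\[
m \;\ge\; \sum_i|A_i| \;\ge\; rk+2r+s \;\ge\; (r-1)d+2r \;=\; (r-1)(d+2)+2 \;=\; m+1,
\]
a contradiction. Thus both hypotheses of Theorem~\ref{thm:main} hold, and its conclusion is that $X$ is $(m-r-1)=(N-r)$-connected.

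Suppose now, for contradiction, that Theorem~\ref{thm:glavna-jvz} fails. Then $F$ is nowhere zero and normalizes to an $S_r$-equivariant map $\bar F\colon X\to S^{(d+1)(r-1)-1}$. Since $r=p^{e}$ is a prime power, restrict the action to an elementary abelian $p$-subgroup $G\le S_r$ that acts transitively on the $r$ coordinates. Volovikov's Borsuk--Ulam type theorem forbids the existence of such an equivariant $\bar F$ whenever the connectivity of the source is at least the dimension of the target sphere. The arithmetic identity $N-r=(r-1)(d+2)-r=(d+1)(r-1)-1$ supplies exactly the required inequality, producing the desired contradiction.

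The main subtlety I foresee is verifying that Volovikov's theorem genuinely applies, i.e.\ that the $G$-action on $X$ is free enough in the relevant range. Since $G$ acts by coordinate permutation on $SymmDelJoin(\mathcal{K})$ and the simplices $(A_1,\dots,A_r;B)$ of $X$ have pairwise disjoint $A_i$, any $G$-fixed simplex would require the non-empty $A_i$ to form a full $G$-orbit of equal sets, which is compatible with disjointness only if at most one $A_i$ is non-empty; such simplices form a low-dimensional invariant subcomplex that is harmless for the equivariant obstruction. This is the standard bookkeeping in the CS/TM literature for deleted joins, and modulo it the argument reduces entirely to a single invocation of Theorem~\ref{thm:main} together with Volovikov's theorem.
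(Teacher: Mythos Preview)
Your proof is correct and follows essentially the same route as the paper: the paper first proves the general Theorem~\ref{thm:main-2} via the CS/TM scheme with Theorem~\ref{thm:main} and Volovikov's theorem, and then in Corollary~\ref{cor:main2glavna} specializes to the skeleta family~(\ref{eq:balance-skeleta}) and checks collective unavoidability (citing \cite{jnpz}), whereas you do both steps at once and verify unavoidability by a direct pigeonhole count. The only cosmetic point is that your closing paragraph about $G$-fixed simplices in $X$ is unnecessary: Volovikov's theorem needs the \emph{target} sphere $S(W^{\oplus(d+1)})$ to be fixed-point-free (which it is, since $G$ permutes the $r$ coordinates nontrivially), not any freeness condition on the source.
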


Theorem~\ref{thm:glavna-jvz} confirmed the conjecture of
Blagojevi\' c, Frick, and Ziegler about the existence of `balanced
Tverberg partitions' (Conjecture~6.6 in \cite{bfz}).
Among the consequences of this theorem is a positive answer
(see \cite[Theorem~7.2]{jvz-2}) to the `balanced case' of the problem
whether each {\em admissible} $r$-tuple is {\em Tverberg
prescribable}, \cite[Question~6.9]{bfz}.

\medskip
The term `balanced partitions' in both results refers to the constraint that a Tverberg $r$-tuple
$(\sigma_1, \sigma_2,\dots,\sigma_r)$  is sought in the symmetric deleted join
\begin{equation}
SymmDelJoin(K_1,\dots, K_r)
\end{equation}
of adjacent skeleta  of the simplex $\Delta_N = 2^{[N+1]}$,
\begin{equation}\label{eq:balance-skeleta}
K_1 = \dots = K_s = {[N+1]\choose \leqslant k+2}, \quad  K_{s+1} = \dots = K_r = {[N+1]\choose \leqslant k+1} \, .
\end{equation}
It is known \cite{jnpz} that the collection of subcomplexes of $2^{[m]}$,
\begin{equation}\label{eq:binom-skel}
\left({[m]\choose \leqslant m_1},\dots,{[m]\choose \leqslant
m_r}\right)
\end{equation}
is always a collectively $r$-unavoidable, provided
$m=\sum_{i=1}^r m_i+r-1$ and in particular (\ref{eq:balance-skeleta}) is such a collection if $N+1 = s(k+2) + (r-s)(k+1) + r-1$.

\medskip
Conditions (\ref{eq:balance-skeleta}) and (\ref{eq:binom-skel}) indicate that collectively $r$-unavoidable complexes
behave very well if in addition we assume that they are balanced. This is precisely the content of Theorem~\ref{thm:main}.
From here it is not difficult to derive a general theorem of Van Kampen-Flores type which includes Theorem~\ref{thm:glavna-jvz} as
a special case and which seems to be new already in the case $r=2$ (Section~\ref{sec:example}).

\begin{thm}\label{thm:main-2}
Suppose that $\mathcal{K} = \langle K_i\rangle_{i=1}^r = \langle K_1,\dots,
K_r\rangle$ is a {\em collectively unavoidable} family of subcomplexes of $2^{[m]}$, where $r = p^\nu$ is a power of a prime number. Assume that $K_i$ is $(m,k)$-balanced
for each $i=1,\dots, r$. Suppose that $N\geq (r-1)(d+2)$ and let $m = N+1$. Then for each continuous map $f : \Delta_N \rightarrow \mathbb{R}^d$,
from an $N$-dimensional simplex into a  $d$-dimensional euclidean space, there exist vertex-disjoint faces $\sigma_1,\dots, \sigma_r$ of $\Delta_N$ such that
$f(\sigma_1)\cap\dots\cap f(\sigma_r)\neq\emptyset$ and
\[
     \sigma_1\in K_1, \sigma_2\in K_2, \dots, \sigma_r\in K_r \, .
\]
\end{thm}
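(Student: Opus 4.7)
The plan is to apply the Configuration Space/Test Map scheme to the configuration space $X := SymmDelJoin(\mathcal{K})$, using Theorem~\ref{thm:main} as the connectivity input and Volovikov's equivariant Borsuk-Ulam theorem as the obstruction. Suppose for contradiction that no vertex-disjoint $r$-tuple $(\sigma_1,\dots,\sigma_r)$ with $\sigma_i\in K_i$ and $f(\sigma_1)\cap\dots\cap f(\sigma_r)\neq\emptyset$ exists. If a tuple $(\tau_1,\dots,\tau_r)$ with $\tau_i\in K_{\pi(i)}$ and the intersection property existed for some $\pi\in S_r$, the reindexing $\sigma_j:=\tau_{\pi^{-1}(j)}$ would deliver a solution in the form stated in the theorem; hence the contradiction hypothesis actually rules out such tuples for every $\pi$. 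Equivalently, $f(A_1)\cap\dots\cap f(A_r)=\emptyset$ for every simplex $(A_1,\dots,A_r;B)\in X$.

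Next I construct the test map. Let $F\colon X\to(\mathbb{R}^d)^{*r}$ be obtained by affinely extending $f$ on each of the $r$ join factors. By the strengthened contradiction assumption, the image of $F$ misses the thin diagonal $D\cong\mathbb{R}^d$. Composition with the standard $S_r$-equivariant deformation retraction of $(\mathbb{R}^d)^{*r}\setminus D$ onto the unit sphere $S(W_r^{\oplus(d+1)})\cong S^{(d+1)(r-1)-1}$, where $W_r$ denotes the standard $(r-1)$-dimensional real representation of $S_r$, produces an $S_r$-equivariant map
\[
\bar F\colon X\longrightarrow S^{(d+1)(r-1)-1}.
\]

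Restrict the $S_r$-action to the elementary abelian $p$-subgroup $G=(\mathbb{Z}/p)^\nu\leq S_r$ embedded via its regular representation on the $r=p^\nu$ positions. Because $W_r^G=0$, the induced $G$-action on the target sphere is fixed-point free, so the hypotheses of Volovikov's theorem apply. By Theorem~\ref{thm:main} the source $X$ is $(m-r-1)$-connected, and the assumption $N\geq(r-1)(d+2)$ together with $m=N+1$ rearranges to
\[
m-r-1\;\geq\;(d+1)(r-1)-1.
\]
Volovikov's theorem then rules out a $G$-equivariant map from a space of this connectivity to a fixed-point free $G$-sphere of this dimension, contradicting the existence of $\bar F$.

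The main obstacle is not the scheme itself, which is essentially the one used in \cite{jvz-2}, but verifying that the connectivity bound of Theorem~\ref{thm:main} matches the Volovikov threshold precisely and that the regular-representation embedding $G\hookrightarrow S_r$ produces a fixed-point free action on the test sphere; the latter is where the prime-power hypothesis $r=p^\nu$ is consumed. Both points are built into the numerology and present no real difficulty once Theorem~\ref{thm:main} is in hand.
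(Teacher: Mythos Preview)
Your proof is correct and follows essentially the same approach as the paper: set up the Configuration Space/Test Map scheme on $SymmDelJoin(\mathcal{K})$, invoke Theorem~\ref{thm:main} for the connectivity bound, check the numerology $m-r-1\geq (r-1)(d+1)-1$, and derive a contradiction from Volovikov's theorem. You are somewhat more explicit than the paper about the reindexing argument (why ruling out one permutation rules out all of them) and about the passage to the elementary abelian $p$-subgroup $(\mathbb{Z}/p)^\nu\leq S_r$ that makes Volovikov applicable, but these are expository elaborations rather than a different strategy.
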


\begin{proof}
Let $V = ((\mathbb{R}^d)^{\ast r}) \cong \mathbb{R}^{rd + r-1}$ be the $r$-fold join of $\mathbb{R}^d$ and let $D \cong \mathbb{R}^d$ be the diagonal
subspace in $V$.
A map $f: \Delta_N \rightarrow \mathbb{R}^d$ induces an $S_r$-equivariant map
\begin{equation}\label{eq:equivariant}
  F : SymmDelJoin(\mathcal{K}) \rightarrow  V/D \cong \mathbb{R}^{(r-1)(d+1)}
\end{equation}
and the theorem follows from the observation that this map must have a zero. If not, there arises an equivariant map
\begin{equation}\label{eq:equivariant-2}
  F' : SymmDelJoin(\mathcal{K}) \rightarrow  S(V/D) \cong S^{(r-1)(d+1)-1}
\end{equation}
which contradicts Volovikov's theorem \cite{Z17}, since the space $SymmDelJoin(\mathcal{K})$ is $(m-r-1)$-connected (Theorem~\ref{thm:main})
and  $m-r-1 \geq (r-1)(d+1)-1$ is equivalent to $N \geq (r-1)(d+2)$.
\end{proof}

\begin{cor}\label{cor:main2glavna}
Theorem~\ref{thm:glavna-jvz} is a special case of Theorem~\ref{thm:main-2}.
\end{cor}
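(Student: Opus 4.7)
The plan is to specialize Theorem~\ref{thm:main-2} to the family $\mathcal{K} = \langle K_1,\dots, K_r\rangle$ of adjacent simplex skeleta from (\ref{eq:balance-skeleta}). Concretely, I would set
\[
K_i := {[m] \choose \leqslant k+2}\ \text{for}\ 1 \leq i \leq s, \qquad K_i := {[m] \choose \leqslant k+1}\ \text{for}\ s < i \leq r,
\]
with $m := rk + s + 2r - 1$ and $N' := m - 1 = rk + s + 2(r-1)$. With this choice the condition $\sigma_i \in K_i$ translates literally into the dimensional constraints $\dim \sigma_i \leq k+1$ (for $i \leq s$) and $\dim \sigma_i \leq k$ (for $i > s$) of Theorem~\ref{thm:glavna-jvz}, so Theorem~\ref{thm:main-2} will yield the desired Tverberg-type conclusion provided its three hypotheses are met for this family.

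The core of the argument is then to verify those three hypotheses. Balancedness, with common parameter $k' = k+1$, is immediate: both complexes are sandwiched between ${[m] \choose \leqslant k+1}$ and ${[m] \choose \leqslant k+2}$. Collective $r$-unavoidability follows from the criterion (\ref{eq:binom-skel}) applied with $m_i = k+2$ for $i \leq s$ and $m_i = k+1$ for $i > s$: the resulting sum $\sum_i m_i + r - 1 = s(k+2) + (r-s)(k+1) + r - 1 = m$ agrees with the chosen ground-set size by construction. Finally, the numerical inequality $N' \geq (r-1)(d+2)$ demanded by Theorem~\ref{thm:main-2} is simply a rewriting of the arithmetic hypothesis $rk + s \geq (r-1)d$ of Theorem~\ref{thm:glavna-jvz}.

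The only step requiring genuine care is the passage from the tight value $N = N'$ carried by the construction to an arbitrary $N \geq (r-1)(d+2)$. For this, I would observe that the conclusion of Theorem~\ref{thm:glavna-jvz} is monotone in $(k,s)$: decreasing $rk+s$ strictly tightens the dimensional bounds on the $\sigma_i$, so any solution at a smaller pair $(k',s')$ is a fortiori a solution at the given pair. It therefore suffices to prove the theorem for the unique pair with $rk+s = (r-1)d$ and $0 \leq s < r$; for this pair $N' = (r-1)(d+2) \leq N$, and one applies Theorem~\ref{thm:main-2} to the restriction of $f : \Delta_N \to \mathbb{R}^d$ to any subsimplex $\Delta_{N'} \subseteq \Delta_N$. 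The main obstacle is essentially bookkeeping: the balancedness index in Theorem~\ref{thm:main-2} is shifted by one relative to the $k$ of Theorem~\ref{thm:glavna-jvz}, and the unavoidability criterion forces the ground set to have exactly size $m = rk + s + 2r - 1$; no new topological or combinatorial input is needed.
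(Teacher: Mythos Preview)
Your proposal is correct and follows essentially the same route as the paper's own proof: specialize Theorem~\ref{thm:main-2} to the skeleta family~(\ref{eq:balance-skeleta}), verify balancedness and collective unavoidability via the criterion~(\ref{eq:binom-skel}), and check that the numerical condition $N'\geq (r-1)(d+2)$ unwinds to $rk+s\geq (r-1)d$. Your treatment is in fact slightly more explicit than the paper's, which simply asserts that one may reduce to the tight case $N=(r-1)(d+2)$ and $rk+s=(r-1)d$; you spell out both the monotonicity-in-$(k,s)$ reduction and the restriction of $f$ to a subsimplex $\Delta_{N'}\subseteq\Delta_N$, as well as the index shift $k'=k+1$ in the balancedness parameter.
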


\begin{proof}
It can be easily shown that Theorem~\ref{thm:glavna-jvz} is reduced to the case when $N = (r-1)(d+2)$ and $rk+s = (r-1)d$. Indeed, for given $r$ and $d$
one is interested in the smallest $k$ and $N$ for which the theorem is still valid.

All that remains to be checked is that the collection $\mathcal{K}$, described by  (\ref{eq:balance-skeleta}), is collectively $r$-unavoidable.
However, knowing that (\ref{eq:binom-skel}) is collectively $r$-unavoidable if
$m=\sum_{i=1}^r m_i+r-1$, it is sufficient to check the condition
\begin{equation}
   m = N+1 = s(k+2) + (r-s)(k+1) + r-1 \, .
\end{equation}
 Since $rk+s = (r-1)d$ this is equivalent to $N = (r-1)(d+2)$.
\end{proof}

\subsection{Example}\label{sec:example}

Let $m= 2k+2$ and assume that $K$ is a {\em $(m,k)$-balanced} simplicial subcomplex of $2^{[m]}$.
Let $K^\circ$ be the Alexander dual of $K$. Then the pair $(K, K^\circ)$ of simplicial complexes is
collectively $2$-unavoidable (by the definition of Alexander duality, see also \cite{jnpz}). Moreover, $K^\circ$ is also $(m,k)$-balanced.
By Theorem~\ref{thm:main} the symmetric deleted join,
\begin{equation}\label{eqn:SymmDelJoin}
    SymmDelJoin(K, K^\circ)
\end{equation}
is an $(m-3)$-connected, $(m-2)$-dimensional simplicial complex.

\begin{rem}{\rm
The symmetric deleted join (\ref{eqn:SymmDelJoin}) is a union of two overlapping Bier spheres of dimension $(m-2)$. It follows (essentially from the Mayer-Vietoris exact sequence)
that $(K\ast K^\circ)\cup (K^\circ\ast K)$  is $(m-3)$-connected if and only if $(K\ast K^\circ)\cap (K^\circ\ast K)$ is $(m-4)$-connected. This holds for balanced complexes and can be
 established by a direct argument.
}
\end{rem}

As a consequence we obtain a result which generalizes the classical Van Kampen-Flores theorem and reduces to the Theorem~6.6 from \cite{bfz} if $K = {{[m]}\choose{\leqslant k}}$.

\begin{thm}
For each continuous map $f : \Delta^{m-1} \rightarrow \mathbb{R}^{m-2}$ (where $m = 2k+2$) and each $(m,k)$-balanced simplicial complex $K\subset 2^{[m]}$, there
exist faces $F_1\in K$ and $F_2\in K^\circ$ such that
\[
  f(F_1) \cap f(F_2) \neq \emptyset \, .
\]
\end{thm}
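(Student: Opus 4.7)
The plan is to apply the Configuration Space / Test Map scheme as in the proof of Theorem~\ref{thm:main-2}, specialized to $r=2$ and the ordered pair $\mathcal K = \langle K, K^\circ\rangle$. First I would verify the hypotheses of Theorem~\ref{thm:main}: that $K^\circ$ is also $(m,k)$-balanced (an easy parity check using $|A|+|[m]\setminus A|=m=2k+2$, which turns the balancedness of $K$ into that of $K^\circ$), and that $\langle K,K^\circ\rangle$ is collectively $2$-unavoidable (this is essentially the definition of Alexander duality: if $A_1,A_2\subseteq[m]$ are disjoint and $A_2\notin K^\circ$, then $[m]\setminus A_2\in K$, and downward closure applied to $A_1\subseteq[m]\setminus A_2$ forces $A_1\in K$).

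Next, Theorem~\ref{thm:main} yields that $X := SymmDelJoin(K,K^\circ)$ is an $(m-3)$-connected, $(m-2)$-dimensional simplicial complex equipped with a free $S_2$-action (the swap). Assuming the conclusion of the theorem fails, the induced $S_2$-equivariant test map
\[
  f^{*2}\colon X \longrightarrow V := (\mathbb R^{m-2})^{*2}
\]
avoids the thin diagonal $D\subset V$, and collapsing $D$ produces an $S_2$-equivariant map $F'\colon X \to S(V/D)\cong S^{m-2}$ with antipodal action on the target.

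The main obstacle is to derive a contradiction from the existence of $F'$. In contrast with Theorem~\ref{thm:main-2}, where Volovikov's theorem applies with room to spare, here the $(m-3)$-connectivity of $X$ falls exactly one short of what a direct Volovikov/Borsuk--Ulam argument would require to rule out equivariant maps into $S^{m-2}$. To close this single-dimension gap I would exploit the finer geometric information recorded in the preceding Remark: $X$ is the union $B\cup\tau(B)$ of two genuine PL $(m-2)$-spheres (the Bier spheres $K*_\Delta K^\circ$ and $K^\circ*_\Delta K$), interchanged by the $S_2$-action and meeting in an $(m-4)$-connected subcomplex. Restricting $F'$ to a single Bier sphere $B\cong S^{m-2}$ gives a self-map of $S^{m-2}$ whose mod-$2$ degree can be computed by a linking-number / Radon-partition count modelled on the classical proof of van Kampen--Flores (generic transversality of $f^{*2}$ to $D$ inside the ambient cross-polytope $(\Delta^{m-1})^{*_\Delta 2}\cong S^{m-1}$, read off from the portion of the intersection captured by $B$); this degree comes out odd, hence nonzero, and the $S_2$-equivariance relating $F'|_B$ to $F'|_{\tau(B)}$ via antipodal identification on the target produces the required contradiction.
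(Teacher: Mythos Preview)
The paper does not supply a proof of this statement; it is presented immediately after the Remark on Bier spheres as a direct ``consequence'' of the preceding discussion (Theorem~\ref{thm:main} and, implicitly, Theorem~\ref{thm:main-2}). Your diagnosis is therefore sharper than what the paper records: you correctly observe that specializing Theorem~\ref{thm:main-2} to $r=2$, $d=m-2$, $N=m-1$ requires $N\geq (r-1)(d+2)$, i.e.\ $m-1\geq m$, which fails by exactly one; equivalently, the $(m-3)$-connectivity of $SymmDelJoin(K,K^\circ)$ furnished by Theorem~\ref{thm:main} does not by itself rule out an $S_2$-map to $S^{m-2}$. The first two paragraphs of your proposal (balancedness of $K^\circ$, collective $2$-unavoidability, construction of the test map) are correct and coincide with the paper's preamble to the theorem.

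Your third paragraph, however, is not a proof, and in fact no argument along these lines can succeed for the parameters as printed. The claim that $F'|_B$ has odd degree is unsupported: in the classical van Kampen--Flores parity argument the $\mathbb Z_2$-involution acts on a \emph{single} sphere, whereas here the swap $\tau$ does not preserve the Bier sphere $B=K\ast_\Delta K^\circ$ but exchanges it with $\tau(B)=K^\circ\ast_\Delta K$, so equivariance of $F'$ imposes no constraint on $\deg(F'|_B)$ alone. More decisively, the stated theorem is false as written. Take $k=1$, $m=4$, $K=\binom{[4]}{\leqslant 2}$ (which is $(4,1)$-balanced); then $K^\circ=\binom{[4]}{\leqslant 1}$. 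Send $\Delta^3$ affinely to the vertices of a convex quadrilateral in $\mathbb R^{2}=\mathbb R^{m-2}$: for any vertex-disjoint pair $F_1\in K$, $F_2\in K^\circ$ the set $f(F_2)$ is a single corner and $f(F_1)$ is a corner or a segment among the remaining three corners, and these never meet. The honest conclusion is that the paper omits a proof and the printed parameters are off by one (with target $\mathbb R^{m-3}$, or with an extra vertex in the simplex, Theorem~\ref{thm:main-2} applies directly and your first two paragraphs already finish the job); the Bier-sphere degree argument you sketch cannot repair the version stated.
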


\subsection{An example and a comparison with earlier results}

Here we give an example which explains why Theorem~\ref{thm:main} (and its consequence Theorem~\ref{thm:main-2}) are not expected to be
immediate consequences of known results. In other words they cannot be reduced to the case when all the complexes are skeleta, either ${[m]}\choose{\leqslant k}$  or ${[m]}\choose{\leqslant k +1}$.

\medskip
Let $m=9, \ k=2$. Let $K_1= {[9]\choose{\leqslant 2}}\cup ({[9]\choose{\leqslant 2}}\setminus \{A\})$ be the complex  on 9 vertices $1,...,9$ containing all $2$-element subsets and all  $3$-element sets except  $A=\{789\}$.
Let $\Delta$ be an Alexander self-dual complex on the vertices $1,...,6$, for example the minimal, $6$-vertex triangulation of the real projective plane.
Let $K_2=K_3$ be the complex  on 9 vertices $1,...,9$ containing all  $2$-element subsets together with all $3$-element subsets that belong to $\Delta$.

\begin{lemma} Both the triple $\mathcal{K} = (K_1,K_2,K_3)$ and the triple of skeleta
$$ \mathcal{L} = (L_1, L_2, L_3) = \big( {[9]\choose \leqslant 3}, {[9]\choose \leqslant 2}, {[9]\choose \leqslant 2}  \big)    $$ are collectively unavoidable.
\end{lemma}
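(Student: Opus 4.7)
The plan is to treat the two triples separately, since $\mathcal{L}$ falls under a known criterion while only $\mathcal{K}$ requires any nontrivial argument. For $\mathcal{L}$, I would simply invoke the general fact quoted in the paper around (\ref{eq:binom-skel}): the $r$-tuple $\big({[m]\choose \leqslant m_1}, \dots, {[m]\choose \leqslant m_r}\big)$ is collectively $r$-unavoidable whenever $m = \sum_{i=1}^r m_i + r - 1$. With $m = 9$, $r = 3$, and $(m_1, m_2, m_3) = (3, 2, 2)$ one has $3 + 2 + 2 + 2 = 9$, so the hypothesis is met and $\mathcal{L}$ is collectively $3$-unavoidable.

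For $\mathcal{K}$, I would argue by contradiction. Let $A_1, A_2, A_3 \subseteq [9]$ be pairwise disjoint and suppose $A_i \notin K_i$ for every $i$. Unwinding definitions, $A_1 \notin K_1$ forces either $|A_1| \geqslant 4$ or $A_1 = \{7, 8, 9\}$, while for $i \in \{2, 3\}$ the condition $A_i \notin K_i$ forces $|A_i| \geqslant 3$, with the additional constraint $A_i \notin \Delta$ in case $|A_i| = 3$. A cardinality count rules out $|A_1| \geqslant 4$, since that would give $|A_1| + |A_2| + |A_3| \geqslant 4 + 3 + 3 = 10 > 9$. Hence $A_1 = \{7, 8, 9\}$ and $A_2, A_3$ are disjoint $3$-element subsets of $[6]$; since their sizes sum to at most $6$, they in fact partition $[6]$, and neither lies in $\Delta$.

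The only genuinely nontrivial step, and the main obstacle I anticipate, is to rule out this last configuration. Here I would invoke the Alexander self-duality of $\Delta$: the definition of the Alexander dual gives, for any $3$-element subset $S \subseteq [6]$, the equivalence $S \in \Delta \iff [6] \setminus S \notin \Delta$. Applied to $S = A_2$ this yields $A_3 = [6] \setminus A_2 \in \Delta$, contradicting $A_3 \notin \Delta$. Everything else in the argument is routine bookkeeping on cardinalities, so the whole proof should be short once the self-duality observation is in place.
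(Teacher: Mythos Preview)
Your proof is correct. The paper does not actually supply a proof of this lemma, so there is nothing to compare against directly; your argument is the natural one. For $\mathcal{L}$ you correctly invoke the pigeonhole criterion quoted around~(\ref{eq:binom-skel}), and for $\mathcal{K}$ your argument is precisely what case~(4) of Proposition~\ref{prop:KG} (stated later in the paper) unpacks to in this instance: the tripartite Kneser graph has only one vertex in the first part, namely $\{7,8,9\}$, so a $3$-clique would force two complementary $3$-subsets of $[6]$ both missing from $\Delta$, which Alexander self-duality rules out.
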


\medskip
Let us observe that the symmetrized deleted join $SymmDelJoin(\mathcal{L})$ is not contained in $SymmDelJoin(\mathcal{K})$.
For example, $$(789,34,12;56)\in SymmDelJoin(\mathcal{L})\setminus SymmDelJoin(\mathcal{K})\, .$$

\medskip
This fact indicates that there does not exist an obvious $S_3$-equivariant map  $f : SymmDelJoin(\mathcal{L})\rightarrow SymmDelJoin(\mathcal{K})$
and therefore this argument cannot be applied to deduce Theorem~\ref{thm:main-2} from the
``Balanced Van Kampen-Flores theorem'' (Theorem~\ref{thm:glavna-jvz}).

\section{Collective unavoidability of balanced $r$-tuples}
\label{sec:collective}

Let $K_1,\dots, K_r$ be a collection of $(m,k)$-balanced complexes.
Each $K_i$ can be represented as
\[
K_i =  {{[m]}\choose{\leqslant k+1}} \setminus \mathcal{A}^i = {{[m]}\choose{\leqslant k+1}} \setminus \{A^i_1,\dots, A^i_{k_i}\}
\]
where $\vert A^i_j\vert = k+1$ for each $i$ and $j$.

Let us define an $r$-partite graph $\Gamma = \Gamma(K_1,\dots, K_r)$ whose vertices are
labeled by pairs $(i,j)$ where $i=1,\dots, r$ and $j = 1,\dots, k_i$ for each $i$. Two vertices $(i,j)$ and $(i',j')$ share an edge if $i\neq i'$
and $A^i_j\cap A^{i'}_{j'}=\emptyset$.

\begin{rem} {\rm
In agreement with the standard definition of the Kneser graph $KG(\mathcal{F})$ of a family of sets (see ...) the graph $\Gamma$ can be also
described as the $r$-partite Kneser graph $KG(\mathcal{A})$ of the graded family $\mathcal{A} = \mathcal{A}^1\sqcup\dots\sqcup\mathcal{A}^r$. }
\end{rem}

\begin{prop}\label{prop:KG}
Assume that $d = r(k+2)-n$. Suppose that $K_1,\dots, K_r$ is a collection of $(m,k)$-balanced subcomplexes of $2^{[m]}$
 and let $\Gamma = \Gamma(K_1,\dots, K_r)$ be the associated $r$-partite Kneser graph.

\begin{enumerate}
\item[(1)]  If $d>r$, then the collection $(K_1,\dots, K_r)$ is {\em always} collectively unavoidable.

\item[(2)] If $d<1$,   then the collection $(K_1,\dots, K_r)$ is {\em not} collectively unavoidable.

\item[(3)]  If $d=1$ the only collectively unavoidable $r$-tuple is $$K_1 = K_2 = \dots = K_r = {{[m]}\choose{\leqslant k+1}}\, .$$

\item[(4)] If $1<d\leq r$, then the collection $(K_1,\dots, K_r)$ is collectively unavoidable iff the $r$-partite associated Kneser graph $\Gamma = \Gamma(K_1,\dots, K_r)$
          contains no $d$-clique.
\end{enumerate}
\end{prop}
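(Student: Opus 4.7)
The plan is to analyze \emph{bad tuples}, by which I mean ordered collections $(A_1, \dots, A_r)$ of pairwise disjoint subsets of $[m]$ with $A_i \notin K_i$ for every $i$; collective unavoidability is precisely the statement that no bad tuples exist. The first and most important observation is that since $\binom{[m]}{\leq k} \subseteq K_i$ for each $i$, any bad tuple must satisfy $|A_i| \geq k+1$ for every $i$. Combined with pairwise disjointness inside $[m]$, this forces $\sum_i |A_i| \leq m = r(k+1) + (r - d)$, so the total ``excess'' $\sum_i (|A_i| - (k+1))$ is at most $r - d$. This single inequality will drive all four cases.

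Parts (1) and (2) fall out immediately. If $d > r$ then $m < r(k+1)$, so $r$ pairwise disjoint sets each of size $\geq k+1$ simply do not fit, hence no bad tuples exist and $(K_1,\dots,K_r)$ is vacuously collectively unavoidable. If $d \leq 0$ then $m \geq r(k+2)$, so I can choose $r$ pairwise disjoint $(k+2)$-subsets $A_1, \dots, A_r$ of $[m]$; each such $A_i$ has size exceeding $k+1$ and thus lies outside $K_i \subseteq \binom{[m]}{\leq k+1}$, giving a bad tuple and proving the collection is not unavoidable.

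For part (3), where $d = 1$ and $m - (k+1) = (r-1)(k+2)$, I argue contrapositively. Assume some $K_i$ is a proper subcomplex of $\binom{[m]}{\leq k+1}$, so $\mathcal{A}^i$ contains some $(k+1)$-set $A$. I partition $[m] \setminus A$ into $r-1$ blocks of size $k+2$ and use them as the remaining entries of a bad tuple, placing $A$ in position $i$. Conversely, when $K_1 = \dots = K_r = \binom{[m]}{\leq k+1}$, the inequality $r(k+2) > m$ forces some $|A_i| \leq k+1$ in any disjoint collection, so that $A_i \in K_i$ automatically.

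Part (4) is the combinatorial heart of the statement and uses the same excess bookkeeping. In any bad tuple, the number of indices with $|A_i| > k+1$ is at most the total excess $r - d$, so at least $d$ indices have $|A_i| = k+1$ exactly; for each such $i$ we get $A_i \in \binom{[m]}{\leq k+1} \setminus K_i = \mathcal{A}^i$, and these $d$ pairwise disjoint sets, sitting in $d$ distinct parts, constitute a $d$-clique in $\Gamma$. Conversely, starting from a $d$-clique $\{B_{i_1}, \dots, B_{i_d}\}$ with $B_{i_j} \in \mathcal{A}^{i_j}$ pairwise disjoint, the complement $[m] \setminus \bigcup_j B_{i_j}$ has exactly $m - d(k+1) = (r-d)(k+2)$ elements, which I partition into $r-d$ blocks of size $k+2$ and assign arbitrarily to the remaining indices; each such block is automatically outside the corresponding $K_i$, so the resulting $r$-tuple is bad. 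The only subtle point I expect to need care with is the exact numerical identity $m - d(k+1) = (r-d)(k+2)$, which holds precisely because $d = r(k+2) - m$; this is what makes the clique-to-bad-tuple construction fit on the nose, with no room to spare and no surplus elements to worry about.
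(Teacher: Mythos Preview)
Your proof is correct and follows essentially the same approach as the paper: the counting argument via $\sum_i |A_i| \leq m$ and $|A_i|\geq k+1$, the explicit bad partitions built from $(k+2)$-blocks in (2) and (3), and in (4) the excess bookkeeping showing at least $d$ of the $A_i$ have size exactly $k+1$ (hence lie in $\mathcal{A}^i$ and form a $d$-clique), together with the converse construction using the identity $m-d(k+1)=(r-d)(k+2)$. Your framing in terms of ``bad tuples'' and the explicit excess $\sum_i(|A_i|-(k+1))\leq r-d$ is slightly more streamlined than the paper's presentation, but the substance is identical.
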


\proof

\par\noindent (1)\, In this case $d>r \Leftrightarrow r(k+2)-m > r \Leftrightarrow r(k+1)>m$.
For each partition $[m] = B_1\sqcup\dots\sqcup B_r$ there exists $i$ such that  $\vert B_i\vert \leq m$, hence $B_i\in K_i$.

\par\noindent (2)\, If $d<1$ then $m>r(k+2)$. In this case there exists a partition $[m] = B_1\sqcup\dots\sqcup B_r$ where $\vert B_i\vert \geq k+2$ for each $i$, proving that
$(K_1,\dots, K_r)$ is {\em not} collectively unavoidable.

\par\noindent (3)\, In this case $m = r(k+2)-1$. Let $B\in {{[m]}\choose{\leqslant k+1}}$ be an arbitrary set. For a given $i$ let
$[m] = B_1\sqcup\dots\sqcup B_r$ be a partition where $B_i = B$ and $\vert B_j\vert = k+2$ for $j\neq i$. If the $r$-tuple $(K_1,\dots, K_r)$ is collectively unavoidable
then $B=B_i\in K_i$ which implies that $K_i =   {{[m]}\choose{\leqslant k+1}}$.

 \par\noindent (4) \, Assume that there is a $d$-clique in the graph $\Gamma = \Gamma(K_1,\dots, K_r)$, associated to a collectively $r$-unavoidable $r$-tuple $K_1,\dots, K_r$.
 Assume that the vertices of the $d$-clique are $A^1_1, A^2_1,\dots, A^d_1$. Let us choose a partition
 $$[m] = A^1_1 \sqcup A^2_1 \sqcup \dots \sqcup A^d_1 \sqcup B_1\sqcup \dots  \sqcup B_{r-d}$$
 such that $\vert B_i\vert = k+2$ for each $i$. This is possible since $d(k+1)+(r-d)(k+2)$ adds up to $m$. This is in contradiction with the collective unavoidability of the
 collection $\langle K_i \rangle_{i=1}^r$ since,
 \[
 A^1_1 \notin K_1,  A^2_1 \notin K_2, \dots, A^d_1 \notin K_d, B_1\notin K_{d+1}, \dots  B_{r-d} \notin K_r \, .
 \]
 For the opposite direction, assume that there exists a partition  $[m] = B_1\sqcup\dots\sqcup B_r$ such that
 $B_i\notin K_i$ for each $i=1,\dots, r$. It follows that $\vert B_i\vert \geq k+1$ for each $i$ and, since $m = r(k+2)-d$, at least $d$ of them have
 exactly $k+1$ elements. This is a clique with desired properties. \hfill $\square$

\section*{Appendix. Discrete Morse theory}
\label{sec:appendix}

By definition, a discrete Morse function (a DMF, for short) is an acyclic
matching on the Hasse diagram of a simplicial complex.
In more details,assume we have a simplicial complex. By $\alpha^p, \ \beta^p$ we
denote its $p$-dimensional simplices, or \textit{$p$-simplices}, for short.
A \textit{discrete vector field} is a matching
$$\big(\alpha^p,\beta^{p+1}\big)$$
such that:
\begin{enumerate}
\item each simplex of the complex is matched at most once,
and
\item in each matched pair, the simplex $\alpha^p$ is a facet of
$\beta^{p+1}$.\end{enumerate}
Given a discrete vector field, a \textit{gradient path} is a sequence of
simplices
$$\alpha_0^p, \ \beta_0^{p+1},\ \alpha_1^p,\ \beta_1^{p+1}, \ \alpha_2^p,\
\beta_2^{p+1} ,..., \alpha_k^p,\ \beta_k^{p+1},\ \alpha_{k+1}^p,$$
which satisfies the conditions:
\begin{enumerate}
\item $\alpha_i^p$ and $\beta_i^{p+1}$ are matched.
\item Whenever $\alpha$ and $\beta$ are neighbors in the path,
$\alpha$ is a facet of $\beta$.
\item $\alpha_i\neq \alpha_{i+1}$.
\end{enumerate}
A path is a \textit{closed} if $\alpha_{k+1}^p=\alpha_{0}^p$.

\medskip
 Along a gradient path we distinguish the ``matching steps'' $\alpha^p_i \nearrow \beta^p_{i}$ from the
  ``splitting steps'' $\beta^p_{i} \searrow \alpha^p_{i+1} $.

\medskip
A
\textit{discrete Morse function on a simplicial complex} is a
discrete vector field without closed paths.
Assuming that a discrete Morse function is fixed, the \textit{
critical simplices} are the non-matched simplices.
\medskip
A DMF gives a way of contracting all the
simplices of the complex that are matched: if a simplex $\beta$ is matched with its
facet $\alpha$
then these two can be contracted by pushing $\alpha$
inside $\beta$.
Acyclicity guarantees that if we have many matchings at a time, one
can consequently perform the contractions. The order of contractions
does not matter, and eventually one arrives at a complex homotopy
equivalent to the initial one.

\medskip
In the paper we use the following fact (it follows straightforwardly from the
above): if a simplicial complex has one zero-dimensional critical simplex, and the
dimensions of the other critical simplices are greater than some $N$, then
the complex is $(N-1)$-connected.

\end{document}